\author{Burden \& Revzen}
\newcommand{\ShortTitle}{Differentiating a $PC^r$ flow}
\newcommand{\LongTitle}{Computing the Bouligand derivative of a class of piecewise--differentiable flows}
\title{\ShortTitle}
\journalname{\LongTitle}
\newtheorem{theorem}{Theorem}[section]
\newtheorem{lemma}[theorem]{Lemma}
\newcommand{\NOcitep}[2][]{}
\newcommand{\NOcitet}[2][]{}
\newcommand{\concept}[1]{\textit{#1}}
\newcommand{\R}{\mathbb{R}}
\newcommand{\F}{\mathcal{F}}
\newcommand{\Cont}{\mathcal{C}}
\newcommand{\Z}{\mathbb{Z}}
\newcommand{\N}{\mathbb{N}}
\newcommand{\B}{\mathcal{B}}
\newcommand{\T}{\mathsf{T}}
\newcommand{\vf}{f}
\newcommand{\flow}{\Phi}
\newcommand{\Ek}{\mathcal{E}_k}
\newcommand{\Cr}{\mathcal{C}^r}
\newcommand{\PCr}{\mathcal{P}\mathcal{C}^r}
\newcommand{\ECr}{\mathcal{P}\mathcal{C}^r}
\newcommand{\Deriv}{\mathrm{\mathbf{D}}}
\DeclareMathOperator{\cone}{Cone}
\newcommand{\veps}{\varepsilon}
\newcommand{\ones}{\mathds{1}}
\newcommand{\conv}{\operatorname{conv}}
\newcommand{\sgn}{\operatorname{sign}}
\providecommand{\doi}[1]{DOI \href{http://dx.doi.org/#1}{#1}}
\begin{document}
\sffamily
\pagestyle{followingpage}
\graphicspath{{figures/}}
\title{\LongTitle}
\begin{abstract}
Event--selected $C^r$ vector fields yield piecewise--differentiable flows~\citep{Burden-2015-multi}, which possess a continuous and piecewise--linear Bouligand (or B--)derivative~\citep[Prop.~4.1.3]{Scholtes2012}; 
here we provide an algorithm for computing this B--derivative.
The number of ``pieces'' of the piecewise--linear B--derivative is factorial ($d!$) in the dimension ($d$) of the space, precluding a polynomial--time algorithm. 
We show how an exponential number ($2^d$) of points can be used to represent the B--derivative as a piecewise--linear homeomorphism in such a way that evaluating the derivative reduces to linear algebra computations involving a matrix constructed from $d$ of these points.
\end{abstract}

\maketitle
\tableofcontents

\section{Background}

This brief note serves as an addendum to~\citep{Burden-2015-multi} that provides a compact representation of a nonclassical derivative operator, namely, the \emph{Bouligand} (or \emph{B--})derivative of the piecewise--differentiable flow associated with a class of vector fields with discontinuous right--hand--sides, termed (in~\citep{Burden-2015-multi}) \emph{event--selected $\Cr$}.
This B--derivative is a piecewise--linear map.
In~\citep[Sec.~7]{Burden-2015-multi} we provided the means to evaluate the B--derivative in a chosen direction (i.e. on a given tangent vector). 
In this note we construct a triangulation for the B--derivative, thus representing it as a piecewise--linear homeomorphism using the techniques from~\citep[Sec.~2]{Groff2003}.

Before proceeding, we informally recapitulate definitions and results from~\citep{Burden-2015-multi} to provide notational and conceptual context for what follows. 
Given an open subset $D\subset\R^d$ and order of differentiability $r\in\N$, the vector field $\vf:D\to\T D$ is termed \emph{event--selected $\Cr$ at $\rho\in D$} if there exists an open set $U\subset D$ containing $\rho$ and a collection of \emph{event functions} $\{h_k\}_{k=1}^n\subset \Cr(U,\R)$ for $\vf$~\citep[Def.~1]{Burden-2015-multi} such that  for all $b\in\B = \{-1,+1\}^n$, with
\[ D_b = \{x\in\R^d \mid \forall k\in\{1,\dots,n\} : b_k (h_k(x) - h_k(\rho)) \ge 0 \}, \]
$\vf|_{\operatorname{Int} D_b}$ admits a $\Cr$ extension $\vf_b:U\to TU$~\citep[Def.~2]{Burden-2015-multi}.
Note that $\vf$ is allowed to be discontinuous along each \emph{event surface} $\Ek = h_k^{-1}(h_k(\rho))$, but is $\Cr$ elsewhere.

If $\vf$ is event--selected $\Cr$ at $\rho\in D$, then~\citep[Thm.~4]{Burden-2015-multi} ensures there exists a piecewise--$\Cr$ flow $\flow:\F\to D$ for $\vf$ defined on an open subset $\F\subset\R\times D$ containing $(0,\rho)$.
The notion of piecewise--differentiablity used in~\citep{Burden-2015-multi} and in what follows is due to Robinson~\citep{Robinson1987}; for a highly--readable exposition of piecewise--$\Cr$ functions and their properties, we refer the interested reader to~\citep{Scholtes2012}.
In short, a continuous function is piecewise--$\Cr$ (or $\PCr$) if its graph is everywhere locally covered by the graphs of a finite number of $\Cr$ functions (termed \emph{selection functions}).
Piecewise--$\Cr$ functions always possess a continuous first--order approximation termed a \emph{Bouligand} (or \emph{B--})derivative~\citep[Prop.~4.1.3]{Scholtes2012}; due to the local finiteness of selection functions, the B--derivative of $\PCr$ functions is piecewise--linear.
The remainder of this note will be devoted to constructing a triangulation for the piecewise--linear B--derivative of the $\PCr$ flow $\flow$.

\section{Normal forms}
Let $f:\,U \to \T U$ 
be an event--selected $\Cont^r$ vector field at $\rho\in\R^n$ with respect to $h:\,U\to\R^d$ where $U$ is a neighborhood of $\rho$.
By assumption, $f$ is $\Cont^r$ everywhere except (perhaps) $h^{-1}(h(\rho)) = \{ x \in U ~|~ \exists k:\, h_k(x) =h_k(\rho)\}$, the ``transition surfaces''.
Also by assumption, $h_k(\rho)$ is a regular value 
for event function $h_k$ and $\Deriv h_k \cdot f > \veps$ everywhere in $U$ for some $\veps > 0$.
If $n \neq d$, the system can be embedded via the technique in~\citep[Remark~4]{Burden-2015-multi} in a higher--dimensional system where the dimension of the state space equals the number of event surfaces, and the matrix $Dh(\rho)$ is invertible.

\subsection{Piecewise--constant sampled systems}
Let $\vf:\,D\to\T U$ be $\ECr$ with respect to $h:\,U\to\R^n$ at $\rho$. 
From the previous section, we assume without loss of generality that $U$ is $n$-dimensional. 
We refer to the zero level sets of the components of $h$, $\Ek := \{ x \in U \,|\,h_k(x) = 0 \}$ as \concept{local sections}~\cite[Def.~1]{Burden-2015-multi}.
Since $\vf$ is $\ECr$, zero is a regular value of each of the $h_k(\cdot)$ functions, and thus $\Ek$ are embedded codimension--1 submanifolds. 

Let $b\in \B = \{-1,+1\}^n$ be a corner of the \emph{hypercube} $\{-1,+1\}^n$.
Define $F_b := \lim_{\alpha\to 0^+} f(h^{-1}(\alpha b))$ the \concept{corner value} of $f$ at the corner $b$.
Note that by construction, all coordinates of $F_b$ are positive and larger than $\veps$.
Extend (by slight abuse of notation) to $F(x) := F_\rho\left(\sgn \Deriv h(\rho)\cdot(x-\rho)\right)$.
The flow ${\hat \Phi}^t(\cdot)$ of $F(\cdot)$ near $\rho$, has transition manifolds which are affine subspaces of co-dimension 1, tangent to the transition manifolds of the original system at $\rho$.
Furthermore,~\cite[Eqn.~(63)]{Burden-2015-multi} shows that the sampled system's flow provides a first--order approximation for that of the original system at $\rho$.

\section{The structure of corner limit flows}

\newcommand{\Imp}{\tau}
We recall from~\citep[Thm.~7]{Burden-2015-multi} that the time--to--impact any local section of an event--selected $C^r$ vector field is a piecewise--$C^r$ function.
For each surface index $k\in\{1,\dots,n\}$, let $\Imp_k:V_k\to\R$ denote the time--to--impact map for the event surface $\Ek = h_k^{-1}(\R)$ defined over a neighborhood $V_k$ containing $\rho$.
Letting $\Imp = (\Imp_1,\dots,\Imp_n):V \to \R^n$ denote the composite function defined over $V = \cap_{k=1}^n V_k$%
\footnote{We note that $V$ is open since each $V_k$ is open and nonempty since $\rho\in V$.},
it follows that for any point $x\in V$ and index $k\in\{1,\ldots,n\}$, $e_k^\T \Imp(x) = \Imp_k(x)$ is the time required for $x$ to flow to surface $\Ek$, i.e. $h_k(\flow(\Imp_k(x),x))=0$ for the flow $\flow:\F\to D$ of the vector field $f$.
Furthermore, it is clear that $\tau$ is injective and its image is an open set, whence Brouwer's Open Mapping Theorem~\citep{Brouwer1911, Hatcher2002} implies $\tau$ is a homeomorphism onto its image.

\begin{lemma}\label{lem:diag}
Time--to--impact maps ($\Imp$ in the preceding paragraph) have the following properties:
\begin{enumerate}
\item they are $\Cr$ on any submanifold that encounters events in the same order;
\item they take the zero level sets of the event functions ($h_k$) to the standard arrangement;
\item \label{itm:allones} they take the vector field ($f$) to the constant vector field $\ones$ (referred to as the \concept{diagonal flow} hereon);
\item \label{itm:impacts} for the diagonal flow, the time--to--impact is $x \mapsto -x$;
\item \label{itm:linear} for sampled systems, the time--to--impact map is piecewise--linear.
\end{enumerate}
\end{lemma}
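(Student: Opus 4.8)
The plan is to verify the five properties essentially in the order listed. The main engine for properties~1--\ref{itm:impacts} is the \emph{flow--cocycle identity} satisfied by $\Imp$, together with the transversality condition $\Deriv h_k\cdot\vf>\veps>0$ that is built into the definition of event--selected $\Cr$; for property~\ref{itm:linear} it is instead the explicit form of the piecewise--constant sampled flow together with the normalization of \S2. Recall from the paragraph preceding the lemma that $\Imp$ is already known to be a homeomorphism of $V$ onto an open subset of $\R^n$.

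Property~1 is essentially~\citep[Thm.~7]{Burden-2015-multi}: on a region consisting of points whose forward trajectories encounter the surfaces $\Ek$ in a common order, the flow $\flow$ is a finite composition of the $\Cr$ flows of the extensions $\vf_b$ and is therefore $\Cr$ jointly in $(t,x)$; each $\Imp_k$ is then the solution in $t$ of $h_k(\flow(t,x))=0$, and since $\tfrac{\partial}{\partial t}h_k(\flow(t,x))=\Deriv h_k(\flow)\cdot\vf(\flow)>\veps>0$ is uniformly nonvanishing, the implicit function theorem yields that $\Imp_k$ --- hence the composite $\Imp$ --- is $\Cr$ there. Property~2 is then immediate: if $x\in\Ek$ then $h_k(x)=0$, so no flowing is needed and $\Imp_k(x)=0$; thus $\Imp(\Ek)\subseteq\{y\st y_k=0\}$, and as $\Imp$ is a homeomorphism onto its open image it carries the arrangement $\bigcup_k\Ek$ onto the trace in $\Imp(V)$ of the coordinate--hyperplane (``standard'') arrangement $\bigcup_k\{y\st y_k=0\}$.

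For property~\ref{itm:allones} I would differentiate the flow--cocycle identity
\[ \Imp_k(\flow(s,x)) = \Imp_k(x) - s , \]
valid for $x\in V$ and $s$ small, which holds because flowing forward by $s$ consumes exactly $s$ units of the time remaining before each surface is hit (this uses only the group law for $\flow$ and the local uniqueness of the hitting time from property~1). Hence $s\mapsto\Imp(\flow(s,x))=\Imp(x)-s\ones$ is a straight line traversed at unit rate along the diagonal direction $\ones$, so $\Imp$ carries each orbit of $\vf$ to an orbit of the constant ``diagonal'' vector field, with $\Deriv\Imp(x)\cdot\vf(x)$ the corresponding constant. Property~\ref{itm:impacts} is then the model computation, consistent with the cocycle: for $\vf\equiv\ones$ and $\Ek=\{y\st y_k=0\}$ the line $t\mapsto y+t\ones$ meets $\{y\st y_k=0\}$ at $t=-y_k$, so $\Imp(y)=-y$ (whence indeed $\Imp(y+s\ones)=-(y+s\ones)=\Imp(y)-s\ones$).

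Finally, for property~\ref{itm:linear} I would restrict attention to a sampled system, where after the linear normalization of \S2 the transition planes of $F$ are the coordinate hyperplanes and $F$ is constant --- equal to the corner value $F_b$ --- on each orthant $D_b$. On a region of fixed crossing order the flow is then a concatenation of maps $x\mapsto x+tF_b$, each affine in $(t,x)$, so the successive hitting times solve linear equations and each $\Imp_k$ restricts to an \emph{affine} function there; since $\rho\in\Ek$ for every $k$ we have $\Imp(\rho)=0$, so these affine pieces are in fact linear about $\rho$, and as there are only finitely many crossing orders (at most $n!$) the continuous map $\Imp$ is piecewise--linear (indeed conewise--linear). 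I expect the real work to lie in properties~1 and~\ref{itm:linear} --- not in the local computations, which are routine, but in the combinatorial bookkeeping: checking that a neighborhood of $\rho$ is tiled by the finitely many ``constant crossing order'' regions, that on each the flow genuinely is the asserted finite composition with well--defined transition times, and that the resulting pieces of $\Imp$ glue continuously along their shared faces.
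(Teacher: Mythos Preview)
Your verification is correct and is precisely the ``direct'' argument the paper's one--line proof alludes to (the paper writes only ``Properties 1--4 are direct; property 5 follows from~\citep[Remark~4]{Burden-2015-multi}''). One small remark: your cocycle $\Imp(\flow(s,x))=\Imp(x)-s\ones$ actually gives $\Deriv\Imp\cdot\vf=-\ones$, which is consistent with item~\ref{itm:impacts} (where $\Imp=-\id$ for the diagonal flow); the ``$\ones$'' in item~\ref{itm:allones} is a sign convention internal to the paper, not a gap in your reasoning.
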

\begin{proof}
Properties 1--4 are direct; property 5 follows from~\citep[Remark~4]{Burden-2015-multi}
\end{proof}

From lemma \ref{lem:diag} we conclude that the flow $\Phi^t(\cdot)$ in a neighborhood of $\rho$ is $\PCr$ conjugate through $\Imp(\cdot)$ to the diagonal flow.
By the definition of $\Imp$, an event $h_k(\cdot)$ occurs in the original coordinates at $x$ if, and only if, $e_k^\T \Imp(x) = 0$.

It remains to analyze the structure of the diagonal flow.
\subsection{The standard cone}

The \concept{cone span} of a set of vectors $X \subseteq \R^n$ is given by $\cone X := \{ y \in \R^n \,|\, \sum_{i=1}^m \alpha_i x_i,\, \alpha_i\in \R^+\}$.
In this section we show that the diagonal flow in $\R^n$ comprises $n\!$ identical cones, whose form we make computationally explicit.
The interior of each of these cones consists of all the points whose transitions happen in a specific order, as specified by a permutation $\sigma \in S^n$.

Assume hereon that $\dot y = \ones$ is the diagonal flow, conjugate through $\Imp(\cdot)$ to $\dot x = f(x)$, i.e. $\Deriv \Imp(x) \cdot f(x) = \ones$ everywhere.
For any $y \in \Imp(U)$, let $\sigma_y \in S^n$ be a permutation that sorts the elements of $y$.
This permutation can be represented by a permutation matrix $Z_\sigma$ such that $z := Z_\sigma y$ satisfies:
\begin{align}
\forall 0<i<j\leq \dim U:\, (i<j) \to \left( e_i^\T z \leq e_j^\T z \right)
\end{align}
\newcommand{\Grf}{\mathcal{G}}
We define the group of permutation matrices $\Grf := \{ Z_\sigma \,|\, \sigma \in S^n \}$; this is a representation of $S^n$ which is valid over all fields.

\newcommand{\Cn}{\mathcal{K}}
We define the sets
\begin{align}\label{eqn:zmon}
\Cn_\sigma := \left\{ Z^{-1}_\sigma z \,\middle|\, z\in \R^n,  z_1 \leq z_2 \leq \ldots \leq z_{n-1} \leq z_n \right\}
\end{align}
and denote by $\Cn$ the set associated with the identity permutation.

By construction, $\Cn_\sigma$ are exactly the points whose impact times are (weakly) in the order specified by $\sigma$.
If impact times are different, i.e. strongly in the order specified by $\sigma$, the inequalities in \ref{eqn:zmon} are strong, and the corresponding point is an interior point of $\Cn_\sigma$.

\subsection{Constructing $\Cn$ as cone span}

\newcommand{\Lt}{M^U}
\newcommand{\Lti}{\Delta^U}

In this section we will use addition and scalar multiplication in a setwise sense, i.e. $AB$ is the set of all products of elements of $A$ and of $B$, $A+B$ is the Minkowski sum -- the set of all possible sums comprising an element of $A$ and an element of $B$.

We define the vectors that form the columns of a lower triangular matrix $\Lt$ with elements zero to be $s_k := \frac{n}{n+1-k}\sum_{i=k}^n e_k$, as follows:
\begin{align}
\Lt := \left[\begin{array}{ccccc}
 1  	&\cdots & \cdots 	& 1  		&  1   \\
 0		& n/(n-1)&n/(n-1)& \ddots 		& n/(n-1)\\
 \vdots &\ddots & \ddots 	& \ddots 	& \vdots\\
 \vdots	&		& \ddots	& n/2 		& n/2 	\\
 0 		&\cdots	& \cdots 	& 0 		& n		
\end{array}\right]
&&
\Lti := \frac{1}{n}\left[\begin{array}{ccccc}
 n 		& -(n-1) 	&0		& \cdots 	& 0 	\\
 0 		& (n-1)		&\ddots	& \ddots   	& \vdots\\
 \vdots & \ddots 	&\ddots	& -2		& 0 	\\
 \vdots & 			&\ddots	& 2 		& -1	\\
 0 		&\cdots 	&\cdots & 0 		& 1  
\end{array}\right]
\end{align}

Note that $\ones \cdot s_k = n$ for all $k$, $s_1 = \ones$.

$\Cn$ is a cone -- all positive multiples of $z$ will satisfy the same inequality as $z$ in \ref{eqn:zmon}, i.e. $\R^+\Cn = \Cn$.
$\Cn$ is also invariant under the diagonal flow, i.e. $\Cn=\R \ones + \Cn$.
Using the $s_k$ vectors we note that $\Cn = \R s_n + \sum_{k=1}^{n-1} \R^+ s_k$.

\newcommand{\A}{\mathcal{A}}
\newcommand{\Qa}{\mathcal{D}}
Define $\Qa_b$ for $b \in \R^n$ as $\Qa_b = \{ x \in \R^b \,|\, \sgn x = \sgn b \}$.
For the diagonal flow, the points of a set $\Qa_b$ are all points that have the same events as for $b$ happen to them in the past (in whatever order), the same events will happen to them as for $b$ (in whatever order) in the future, and the same set of events are currently happening to them as are happening to $b$. 

Let us define the \concept{anti-diagonal} as $\A_0 := \{ p \in R^n \,|\, \ones \dot p = 0 \}$, and similarly define two affine subspaces parallel to $\A_0$ by $\A_\pm := \A_0 \pm \ones$.

\begin{lemma}
The following set-wise equality holds 
\begin{align}
\A_+ \cap \Qa_{\ones} \cap \Cn
&= \conv \{s_1, \ldots, s_n\}\label{eqn:splx}
\end{align}
and describes a simplex of dimension $n-1$.
\end{lemma}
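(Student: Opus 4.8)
The strategy is to pin down each of the three sets in the intersection explicitly, combine the three constraints, and then recognize the resulting polytope as the convex hull of the $s_k$. First I would unpack the definitions: $\A_+ = \{p : \ones^\T p = n\}$ (since $\ones^\T(\ones) = n$); $\Qa_{\ones} = \{x : \sgn x = \sgn \ones\}$ is the open positive orthant $\{x : x_i > 0\ \forall i\}$, whose closure is $\{x_i \ge 0\}$; and $\Cn = \{z : z_1 \le z_2 \le \dots \le z_n\}$ by \refEqn{zmon}. So the left-hand side consists of all $x\in\R^n$ with $0 \le x_1 \le x_2 \le \dots \le x_n$ and $\sum_i x_i = n$ — this is a classical ``order simplex'' slice. (One should note the mild subtlety that $\Qa_{\ones}$ as literally defined is open; either the lemma intends the closed orthant, or the claimed equality is really between closures, and I would state this once and proceed with the closed version, since $\conv\{s_1,\dots,s_n\}$ is closed.)

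Next I would establish the two inclusions. For ``$\supseteq$'': each $s_k = \frac{n}{n+1-k}\sum_{i=k}^n e_i$ has nonnegative, weakly increasing coordinates (the first $k-1$ are $0$, the rest equal the positive constant $\frac{n}{n+1-k}$), so $s_k$ lies in the closed orthant and in $\Cn$; and $\ones^\T s_k = \frac{n}{n+1-k}\cdot(n+1-k) = n$, so $s_k \in \A_+$. Since all three sets ($\A_+$ affine, the closed orthant convex, $\Cn$ convex) are convex, their intersection is convex, hence contains $\conv\{s_1,\dots,s_n\}$. For ``$\subseteq$'': given $x$ with $0 \le x_1 \le \dots \le x_n$ and $\sum x_i = n$, I would produce explicit barycentric coordinates. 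Writing $x = \sum_{k=1}^n \lambda_k s_k$ and inverting the triangular system — this is exactly what the matrix $\Lti = (\Lt)^{-1}$ from the excerpt records, since the $s_k$ are the columns of $\Lt$ — gives $\lambda_k$ as a fixed linear combination of consecutive differences $x_k - x_{k-1}$ (with $x_0 := 0$). The monotonicity $x_1\le\dots\le x_n$ and $x_1 \ge 0$ make each such difference nonnegative, forcing $\lambda_k \ge 0$; and applying $\ones^\T$ to $x = \sum_k \lambda_k s_k$ together with $\ones^\T s_k = n$ and $\ones^\T x = n$ gives $\sum_k \lambda_k = 1$. Hence $x \in \conv\{s_1,\dots,s_n\}$.

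Finally, to see the intersection is a genuine $(n-1)$-simplex rather than a degenerate one, I would check that $s_1,\dots,s_n$ are affinely independent: equivalently that $\Lt$ is invertible, which is immediate since it is lower-triangular (after the obvious reindexing) with nonzero diagonal entries $\frac{n}{n+1-k}$. An $n$-point affinely independent set has convex hull a simplex of dimension $n-1$, and it lies in the hyperplane $\A_+$ of dimension $n-1$, so the dimension count is sharp.

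The only real obstacle is bookkeeping: getting the triangular change of basis right (which column is $s_1$ versus $s_n$, and matching it against the displayed $\Lt,\Lti$) and confirming that the inverse sends the monotone-plus-nonnegative cone exactly onto the nonnegative orthant in $\lambda$-coordinates. That amounts to reading off $\Lti$ row by row — $\lambda_k = \frac{1}{n}\big((n+1-k)x_k - (n-k)x_{k+1}\big)$-type expressions — and verifying nonnegativity under $0\le x_1\le\dots\le x_n$; everything else (convexity of the intersection, the $\ones^\T$ normalization, affine independence) is routine once the definitions are spelled out.
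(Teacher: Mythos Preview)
Your plan is correct and follows essentially the same route as the paper: check each $s_k$ lies in the (convex) intersection to get $\supseteq$, then for $\subseteq$ apply $\Lti$ to an arbitrary $z$ in the left-hand side, read off the barycentric coefficients as nonnegative multiples of the consecutive differences $z_k - z_{k-1}$, and use the constraint $\ones^\T z = n$ (equivalently, that $\ones$ is a right eigenvector of $\Lti$ with eigenvalue $1/n$) to see the coefficients sum to $1$. Your additional remarks---the open--versus--closed orthant subtlety for $\Qa_{\ones}$, and the affine independence of the $s_k$ via invertibility of $\Lt$ to justify the dimension claim---are points the paper glosses over, so including them is an improvement; just be careful with the row/column bookkeeping in your last paragraph, since the tentative formula $\lambda_k = \frac{1}{n}\big((n{+}1{-}k)x_k - (n{-}k)x_{k+1}\big)$ is \emph{not} guaranteed nonnegative under monotonicity, whereas the correct one $\lambda_k = \frac{n{+}1{-}k}{n}(x_k - x_{k-1})$ (with $x_0:=0$), which you stated earlier, is.
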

\begin{proof}
We use some facts from convexity: (1) for a convex set $A$, when $B \subseteq A$ then $(\conv B) \subseteq A$; (2) for convex $A,B$ the intersection $A \cap B$ is also convex;
(3) convex closure preserves linear constraints: if points $B$ meet a constraint $\forall b \in B:\, w \cdot b = c$, then so does their convex closure $\forall x \in \conv b:\, w \cdot x = c$. 
Note that the sets $\A_+$, $\Cn$ and $\Qa_{\ones}$ are convex.

First we show the inclusion of the RHS in the LHS of \ref{eqn:splx}.
For all corners $s_k$, $s_k \in \Qa_{\ones}$ because they have non-negative coordinates; they satisfy $\ones \cdot s_k = 1$; and $s_k \in \Cn$ because the coordinates of $s_k$ are non-decreasing.
From the convexity properties mentioned, the inclusion of the RHS in the LHS follows.

It remains to demonstrate the inclusion of the LHS of \ref{eqn:splx} in the RHS. 
Let $z$ be a point in the intersection in the LHS. 
From $z \in \A_+ \cap \Qa_{\ones}$ it follows that $z \cdot \ones = n$, and $\forall k:\, z_k \geq 0$.
Given $z = z \Lti \Lt$ we have that for $a := z \Lti$ we need to show $\sum_{k=1}^n a_k$ is one, and $a_k \geq 0$.

By examining the columns of $\Lti$ we can see that $a_1 = z_1$, and for $k>1$ we have $a_k = \frac{n+1-k}{n}\left(z_k-z_{k-1}\right)$.  
From $z \in \Cn$, we know $z_k\geq z_{k-1}$, giving us that $a_k \geq 0$.
Note that $\ones$ is (by direct examination) a right eigenvector of $\Lti$ with eigenvalue $n^{-1}$, giving
\begin{align}
\sum_{k=1}^n a_k = z \Lti \cdot \ones = \frac{1}{n} z \cdot \ones = 1
\end{align}
proving the inclusion of the LHS in the RHS, and thus the desired equality.
\end{proof}

This allows us to state a more general theorem
\begin{theorem}
\begin{align} \A_+ \cap \Qa_{\ones} \cap \Cn_\sigma
&= \conv \{Z_\sigma s_1, \ldots, Z_\sigma s_n\}\label{thm:partn}
\end{align}
\end{theorem}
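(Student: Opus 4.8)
The plan is to obtain~(\ref{thm:partn}) from the preceding lemma, which is exactly the case $\sigma = \mathrm{id}$, by transporting that identity through the permutation matrix $Z_\sigma \in \Grf$. The key observation is that $Z_\sigma$ is an invertible linear map, so it commutes with the formation of convex hulls, $Z_\sigma(\conv S) = \conv(Z_\sigma S)$, and, being a bijection, it distributes over intersections, $Z_\sigma(A\cap B\cap C) = Z_\sigma A\cap Z_\sigma B\cap Z_\sigma C$. Hence it suffices to record how $Z_\sigma$ acts on each of the three factors $\A_+$, $\Qa_{\ones}$, $\Cn$ on the left-hand side and on the vertex set $\{s_1,\ldots,s_n\}$ on the right-hand side.

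For the first two factors I would argue that they are fixed by \emph{every} $Z_\sigma$. The affine subspace $\A_+$ is cut out by the single linear equation $\ones\cdot p = n$ (since $\A_0 = \ker(\ones\cdot)$ and $\A_+ = \A_0 + \ones$), and permuting coordinates leaves the coordinate sum unchanged, so $Z_\sigma\A_+ = \A_+$. Similarly $\Qa_{\ones}$ is characterized by a sign pattern; since $\sgn$ commutes with coordinate permutations and $\sgn(Z_\sigma\ones)=\sgn\ones$, we get $Z_\sigma\Qa_{\ones} = \Qa_{\ones}$ (using the same reading of $\sgn$ as in the lemma, under which the vertices $s_k$, having non-negative coordinates, lie in $\Qa_{\ones}$). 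The third factor is where $\sigma$ actually enters: by the defining formula~(\ref{eqn:zmon}), $\Cn_\sigma = Z_\sigma^{-1}\Cn$, i.e. the appropriate element of $\Grf$ carries the ``identity'' cone $\Cn$ onto $\Cn_\sigma$.

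Assembling these, I would apply $Z_\sigma^{-1}$ to both sides of the lemma's equation, distribute it over the intersection on the left using the two invariances together with $Z_\sigma^{-1}\Cn = \Cn_\sigma$, and pull it through the convex hull on the right to reach $\conv\{Z_\sigma^{-1}s_1,\ldots,Z_\sigma^{-1}s_n\}$; relabelling $\sigma\mapsto\sigma^{-1}$ then gives~(\ref{thm:partn}). Because $Z_\sigma$ is a linear isomorphism it preserves affine dimension, so the right-hand side is again an $(n-1)$-simplex, consistent with the lemma. One could alternatively phrase the whole argument in the interior/closure language used earlier---the interior points of $\Cn_\sigma$ are precisely the images under $Z_\sigma^{-1}$ of strictly sorted vectors, i.e. the points whose impact times are strictly ordered by $\sigma$---but the setwise computation above is the most economical route.

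I do not expect a genuine obstacle: the content of the theorem is just that the entire construction of the previous subsection is equivariant with respect to the permutation representation $\Grf$. The only thing that needs care is bookkeeping the placement of $Z_\sigma$ versus $Z_\sigma^{-1}$, so that the index convention in~(\ref{eqn:zmon}) is matched to the vertices $Z_\sigma s_k$ in~(\ref{thm:partn}) (and, cosmetically, pinning down the open-versus-closed reading of $\Qa_{\ones}$ so that it is used consistently in the lemma and here). Neither point introduces new mathematics.
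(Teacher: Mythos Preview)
Your approach is essentially identical to the paper's: both transport the lemma through $Z_\sigma$, using that convex hull commutes with linear maps, that $\A_+$ and $\Qa_{\ones}$ are permutation--invariant, and that $\Cn_\sigma$ is the $Z_\sigma^{\pm 1}$--image of $\Cn$. You are in fact more scrupulous than the paper about the $Z_\sigma$ versus $Z_\sigma^{-1}$ bookkeeping (the paper simply writes $\Cn_\sigma = Z_\sigma\Cn$ without reconciling it with the $Z_\sigma^{-1}$ in~(\ref{eqn:zmon})); your relabelling $\sigma\mapsto\sigma^{-1}$ is the right fix.
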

\begin{proof}
Convex closure commutes with all linear maps, in particular the $Z_\sigma$ maps that give $\Cn_\sigma = \Z_\sigma \Cn$.
We obtain $\Cn_\sigma = \conv \left\{ Z_\sigma s_k \right\}_{k=1}^n$.
Furthermore $Z_\sigma \A_+ = \A_+$ and $Z_\sigma \Qa_{\ones} = \Qa_{\ones}$, allowing us to conclude the desired result.
\end{proof}

By construction $\left\{\Cn_\sigma \right\}_{ \sigma \in \mathsf{S}_n }$ are a cover of $\R^n$.
We conclude that by knowing how the corners in \ref{thm:partn} map through the flow we may deduce the values at all other points by barycentric (convex) interpolation.

\subsubsection{The sampling points $\{Z_\sigma s_k\}_{k=1}^n$}
\newcommand{\Pts}{\mathcal{P}}
The set of simplex corner points $\Pts := \{Z_\sigma s_k\}_{k=1}^n$ is smaller than may appear at first.
This is due to the fact that the cardinality $\#\{Z_\sigma s_k\}_{\sigma\in\mathsf{S}_n}={n \choose k}$ -- there are only as many of them as there are ways of choosing $k$ of the $n$ coordinates to be zero. 
Overall we obtain $\#\Pts = 2^n$, the number of ways to select which entries will be zero out of $n$ possibilities.
Thus by flowing forward all $2^n$ points of $\Pts$ from $\A_-$ until they impact $\A_+$, we obtain all the necessary information for computing all $n!$ affine transformations that comprise the first order approximation of the flow near the origin.

\subsection{Bypassing the need for the map $\Imp(\cdot)$}

The trajectories going through points of $\Pts$ other than $\ones$ have a unique property: each of them has only two times at which events will occur. For $Z_\sigma s_k$, a set of events occurs simultaneously at time $0$, and all other events occurs simultaneously at time $\frac{-n}{n+1-k}$.

We will now show how to compute $\Imp^{-1}(\Pts)$ for corner limit flows.
Let $p\in\Pts$ such that $p = Z_\sigma s_k$, and $p_i \neq 0$ if and only if $i \in S \subseteq \{1,\ldots,n\}$ (i.e. $S$ is the support of $p$).
Let $b \in \B$ such that $b_i = -1$ for $i\in S$ and $b_i = 1$ otherwise (for $i \in {\bar S}$).
In the diagonal flow, $\Qa_b$ is the set of points for which the events of $S$ have already happened, and the remaining events ${\bar S}$ are yet to occur.

Let us construct a trajectory which at time 0 goes through $q(0) := \Imp^{-1}{p}$.
The state $q(0)$ is on the event surfaces of $h_i$ for $i \in {\bar S}$, i.e. $\forall i \in {\bar S}:\, \nabla h_i (\rho) \cdot q(0) = 0$.
For any positive time $t_+>0$, we have $q(t_+)=q(0) + F(\ones) t_+$.

The remaining events for this trajectory occurred at time $-t := \frac{-n}{n+1-k}$, and in the time interval $(-t,0)$ the trajectory was moving under the influence of $F(b)$.
Thus $q(-t)=q(0)-t F(b)$ is on the remaining event surfaces, i.e. $\forall i \in S:\, \nabla h_i (\rho) \cdot q(0) = t \nabla h_i(\rho) \cdot F(b)$.
For time s$t_-<-t$ the trajectory moved under the influence of $F(-\ones)$, i.e. $q(t_-) = q(0) - t F(b) + (t_-+t) F(-\ones)$, completing the description of the trajectory.

Note that the vector whose $S$ coordinates are of value $t$, and whose $\bar S$ coordinates are $0$ is  
\begin{align}
s(b) := \frac{(b + \ones) n}{2n+2-\ones\cdot(b+\ones)} = Z_\sigma s_k \label{eqn:bones}
\end{align}
Thus we obtain a simplified equation
\begin{align}
\Deriv h (\rho) q(0)  = \mathrm{diag}(s(b)) \, \Deriv h (\rho)\cdot F(b) \label{eqn:nlcorner}
\end{align}
which allows $q(0)$ to be solved for in the original (fully non-linear) coordinates for all $b \in \B$, provided the corner limits $F_b$ and event Jacobian $\Deriv h$ can be computed.

Note also that $\Deriv h$ is never inverted, allowing this computation to be used with event surfaces that are not transverse. 

\subsection{Computing the derivative of the flow}
\newcommand{\Vrt}{\mathcal{V}}
\newcommand{\Xs}{\mathcal{X}}

\begin{figure}
\centering
\fbox{
\includegraphics[width=.85\columnwidth,trim={1cm 9cm 2cm 1cm},clip]{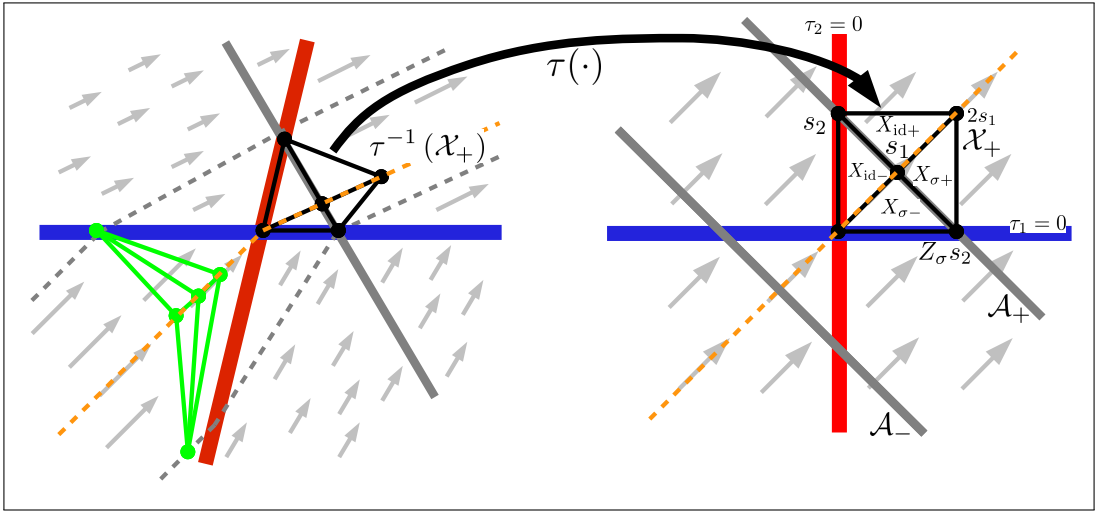}
}
\caption{A corner limit flow [left] is mapped by $\Imp(\cdot)$ into impact time coordinates [right]. %
In impact times coordinates, we define triangulation $\Xs_+$ which comprises simplices that go through known transition sequences ($\text{id}$ and $\sigma$ in this 2D case; black lines).
The differential we seek is given by the piecewise linear homeomorphism produced by carrying this triangulation through the flow [left] from the initial locations $q_k(-T)$ (green) to the final locations $q_k(0)$.
}
\label{fig:cplxs}
\end{figure}
Denote the $q(\cdot)$ of \ref{eqn:nlcorner} and the previous section by $q_b(\cdot)$ to highlight its dependence on $b$.
In addition, define $q_0(0)=\rho$ and for $t_+>0$ take $q_0(t_+) = F(\ones) t_+$ and $q_0(-t_+) = - F(-\ones) t_+$ -- the trajectory of the origin.

We will now define a set of simplexes surrounding the point $q_0(1)$.
The vertices of these simplexes will be 
\begin{align}
\Vrt := \{ q_0(0), q_0(1), q_0(2) \} 
	\cup \left\{q_b(0)\right\}_{b\in\B}
\end{align}

For every permutation $\sigma \in \mathsf{S}_n$ we will have a $n-1$ dimensional face 
\begin{align}
Y_\sigma := \conv \left( 
	\{q_0(1)\} \cup \left\{ q_b(0) \,\middle|\,
	b_{\sigma(1)},\ldots,b_{\sigma(k)} = -1,
    b_{\sigma(k+1)},\ldots,b_{\sigma(n)} = 1,
    \text{for~} 0<k<n\right\}
    \right)\label{eqn:faces}
\end{align}
These faces cover a neighborhood of $q_0(1)$ in the affine subspace $\Imp^{-1}(\A_+)$.

For each face $Y_\sigma$ we will define two simplices $X_{-\sigma}$ and $X_{+\sigma}$ by $X_{-\sigma} = \conv \left( \{q_0(0)\} \cup Y_\sigma\right) $ and $X_{+\sigma} = \conv \left( \{q_0(2)\} \cup Y_\sigma\right)$.
Let $\Xs_+ := \left\{X_{-\sigma},X_{+\sigma}\right\}_{\sigma \in \mathsf{S}_n}$ be the set of all these simplices.
By construction: (1) $\Xs_+$ covers a neighborhood of $q_0(1)$; (2) All points of each simplex $X_{\pm\sigma} \in \Xs_+$ experience events in the same order -- the order designated by $\sigma$. 

At any time $-T < -n$ no events have occurred for any of trajectories $q_b(\cdot)$, i.e. $\forall b\in \B:\, h(q_b(-T))<0$ and furthermore $h(q_0(2-T))<0$.
We can conclude this because the event times are known for each $q_b(\cdot)$ and take on the values $0$ and $\frac{-n}{n+1-k}$ for $1\leq k\leq n$.

Let $\Xs_- := \left\{ \conv \{ q_k(-T) \} \,|\, \conv \{ q_k(0) \} \in \Xs_+ \right\}$, i.e. $\Xs_-$ are the simplices created by taking the convex closure of the corners of simplices of $\Xs_+$ after carrying them back to time $-T$.
All that remains is to construct the piecewise linear homeomorphism mapping $\Xs_-$ to $\Xs_+$ using the techniques of \citep[Sec.~2]{Groff2003}.
In the parlance of that reference, our $q_k(0)$ are the set of $q$ points; our $q_k(-T)$ points are the set of $p$ points; the simplicial complex structure $\Sigma$ is given by equation \ref{eqn:faces}.
Figure \ref{fig:cplxs} shows a example 2-dimensional case.

\section{Acknoledgement}
This work was supported by ARO W911NF–14–1–0573

\end{document}